\title{Fractal Origin of the Continuum: A Hypothesis on Process-Relative Definability}
\author{Stanislav Semenov \\
\href{mailto:stas.semenov@gmail.com}{stas.semenov@gmail.com} \\
\href{https://orcid.org/0000-0002-5891-8119}{ORCID: 0000-0002-5891-8119}}
\date{April 5, 2025}
\theoremstyle{definition}
\newtheorem{definition}{Definition}[section]
\theoremstyle{plain}
\newtheorem{theorem}[definition]{Theorem}
\newtheorem{corollary}[definition]{Corollary}
\newtheorem{proposition}[definition]{Proposition}
\newtheorem{hypothesis}[definition]{Hypothesis}
\theoremstyle{remark}
\newtheorem*{remark}{Remark}
\begin{document}

\maketitle

\begin{abstract}
We propose a new constructive model of the real continuum based on the notion of \emph{fractal definability}. Rather than assuming the continuum as a completed uncountable totality, we view it as the cumulative result of a vast space of \emph{stratified formal systems}, each defining a countable layer of real numbers via constructive means. The union of all such definable layers across all admissible chains yields a set of continuum cardinality, yet no single system or definability path suffices to capture it in full. This leads to the \emph{Fractal Origin Hypothesis}: the apparent uncountability of the real line arises not from actual infinity, but from the meta-theoretical continuity of definability itself. Our framework models the continuum as a \emph{process-relative totality}, grounded in syntax and layered formal growth. We develop this idea through a formal analysis of \emph{definability hierarchies} and show that the resulting universe of constructible reals is \emph{countable-by-construction}\footnote{That is, each element is definable within some finite syntactic system, but no single procedure enumerates all of them uniformly.} yet inaccessible to any uniform enumeration. The continuum, in this view, is not a static set but a \emph{stratified semantic horizon}.
\end{abstract}

\subsection*{Mathematics Subject Classification}
03F60 (Constructive and recursive analysis), 26E40 (Constructive analysis), 03F03 (Proof theory and constructive mathematics)

\subsection*{ACM Classification}
F.4.1 Mathematical Logic, F.1.1 Models of Computation

\section*{Introduction}

\begin{center}
\textit{The continuum is not a set — it is the space-time of constructive definability.}
\end{center}

What is the origin of the continuum?

In classical set theory, the real line \( \mathbb{R} \) is introduced axiomatically as a completed, uncountable totality \cite{Cantor1883,Simpson2009}. Its cardinality is postulated to be \( 2^{\aleph_0} \), typically constructed via the power set of \( \mathbb{N} \), Dedekind cuts, or equivalence classes of Cauchy sequences. These constructions, however, assume access to a global infinity not available within constructive or syntactic frameworks \cite{Bishop1967,BridgesRichman1987}.

From a constructive standpoint, any formal system \( \mathcal{F} \) governed by enumerable rules can define only a countable subset of the reals—those that admit finite descriptions or convergent approximations in the language of \( \mathcal{F} \) \cite{Weihrauch2000,PourElRichards1989}. Consequently, within any such system, the continuum appears fragmented: a countable island in an uncountable sea.

This observation leads us to a fundamental question: could the apparent uncountability of \( \mathbb{R} \) arise not from ontological commitment to actual infinity, but from the \emph{meta-theoretical diversity of definability} \cite{Semenov2025FractalBoundaries}?

In this paper, we propose the \emph{Fractal Origin Hypothesis}: the continuum emerges as a syntactic horizon, a union of definability layers constructed along all admissible directions of formal growth. Each stratified chain of constructive systems \( \{ \mathcal{F}_n \} \) defines a countable, internally coherent model \( \mathbb{R}_{S_\omega}^{\{\mathcal{F}_n\}} \), but the class of all such chains—denoted \( \mathbb{F}_\omega \)—spans a space of cardinality \( \mathfrak{c} \). Taking the union over all such directions yields a model of the continuum that is \emph{countable-by-construction} yet \emph{not enumerable} within any single formal system.

This framework shifts the foundation of the continuum from external set-theoretic completion to internal syntactic growth \cite{Semenov2025FractalCountability,Semenov2025FractalAnalysis}. It allows us to model \( \mathbb{R} \) as a \emph{process-relative} totality—assembled fractally from the layered expressivity of formal systems. In doing so, we reframe the continuum as a semantic limit of definability itself.

\section{Constructive Systems and Extension Chains}

\begin{definition}[Constructive System]
A formal system \( \mathcal{F} \) is called \emph{constructive} if:
\begin{enumerate}
    \item It has a countable syntax;
    \item All inference and construction rules are syntactically enumerable;
    \item All definable objects can be represented as finite derivations or algorithms.
\end{enumerate}
\end{definition}

\begin{definition}[Constructive Extension Chain]
A sequence \( \{S_n\} \) of subsets of \( \mathbb{R} \) is a \emph{constructive extension chain} in system \( \mathcal{F} \) if:
\begin{itemize}
    \item \( S_0 \) is definable in \( \mathcal{F} \);
    \item For each \( n \), the set \( S_{n+1} \) is generated from \( S_n \) by a syntactically definable extension operator in \( \mathcal{F} \);
    \item Each \( S_n \subseteq \mathbb{R} \) is countable.
\end{itemize}
\end{definition}

\begin{definition}[Fractal Boundary \( S^{\mathcal{F}} \)]
The \emph{fractal boundary of constructivity} in \( \mathcal{F} \) is defined as the union over all such chains:
\[
S^{\mathcal{F}} := \bigcup_{\{S_n\}} \left( \bigcup_{n=0}^\infty S_n \right)
\]
where the outer union ranges over all constructive extension chains definable in \( \mathcal{F} \).
\end{definition}

\begin{theorem}[Fractal Closure is Countable]
\label{thm:closure}
Let \( \mathcal{F} \) be any constructive formal system. Then \( S^{\mathcal{F}} \subseteq \mathbb{R} \) is countable.
\end{theorem}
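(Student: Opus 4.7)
The plan is to realize $S^{\mathcal{F}}$ as a countable union of countable sets. There are two indices to control: the chains themselves, and, within each chain, the layers $S_n$ and their elements.

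First I would bound the collection of admissible chains. By hypothesis $\mathcal{F}$ has countable syntax with enumerable inference rules, so the set of finite syntactic objects of $\mathcal{F}$ is countable. A constructive extension chain is presented by a finite description of $S_0$ together with a syntactically definable extension operator producing $S_{n+1}$ from $S_n$; read this way, each admissible chain carries a finite $\mathcal{F}$-name, and the family of chains injects into the countable syntactic universe of $\mathcal{F}$. Enumerate them as $C^{(k)} = \{S_n^{(k)}\}_{n \ge 0}$, $k \in \mathbb{N}$.

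Second, I would argue that each chain contributes a countable set. For fixed $k$, every $S_n^{(k)}$ is countable by hypothesis, and because $\mathcal{F}$ is constructive its elements are represented by finite derivations, which yields an explicit syntactic enumeration of $S_n^{(k)}$ uniformly in $n$. A standard diagonal pairing then exhibits $\bigcup_n S_n^{(k)}$ as countable without appeal to the full axiom of choice. Combining, $S^{\mathcal{F}} = \bigcup_k \bigcup_n S_n^{(k)}$ is a countable union of countable sets with explicit enumerations, hence countable.

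The main obstacle is the first step: pinning down exactly what it means for a chain to be \emph{definable in $\mathcal{F}$}. The definition requires each extension operator to be syntactically definable, but an $\omega$-sequence of such operators need not itself be $\mathcal{F}$-definable, and without that constraint the family of chains could a priori have cardinality $2^{\aleph_0}$ even though the operator alphabet is countable. I would resolve this by reading Definition~1.2, as the rest of \Cref{thm:closure} seems to intend, as requiring that the entire chain arise from a single $\mathcal{F}$-definable recursion, so that the chain itself has a finite syntactic name. Once this clarification is in place, the remaining steps reduce to standard closure of countable sets under countable unions with uniform enumerations.
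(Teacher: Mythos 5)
Your argument is essentially the paper's own proof: the paper settles the theorem in one line by observing that there are countably many constructive chains definable in \( \mathcal{F} \) and that each contributes a countable union of countable sets, which is precisely your decomposition \( S^{\mathcal{F}} = \bigcup_k \bigcup_n S_n^{(k)} \). The details you supply — finite syntactic names making the family of chains countable, uniform enumerations so the countable-union step needs no appeal to choice, and the reading of the chain definition as a single \( \mathcal{F} \)-definable recursion — are exactly what the paper's phrase ``definable in \( \mathcal{F} \)'' tacitly presupposes, so your clarification fills in its intent rather than diverging from it.
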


\begin{proof}
There are countably many constructive chains \( \{S_n\} \) definable in \( \mathcal{F} \), and each chain produces a countable union of sets. Hence \( S^{\mathcal{F}} \) is a countable union of countable sets.
\end{proof}

\section{Enumerability and Meta-Theoretical Limitations}

\begin{definition}[Enumerability in \( \mathcal{F} \)]
A set \( A \subseteq \mathbb{R} \) is \emph{enumerable in \( \mathcal{F} \)} if there exists a definable function \( e: \mathbb{N} \to A \) within \( \mathcal{F} \) that enumerates all elements of \( A \) without repetition.
\end{definition}

\begin{theorem}[Non-enumerability of \( S^{\mathcal{F}} \)]
\label{thm:nonenum}
Let \( \mathcal{F} \) be a constructive formal system. Then \( S^{\mathcal{F}} \) is not enumerable within \( \mathcal{F} \).
\end{theorem}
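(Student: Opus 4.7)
The plan is to proceed by contradiction using a Cantor-style diagonal argument. Suppose there exists a function $e : \mathbb{N} \to S^{\mathcal{F}}$, definable in $\mathcal{F}$, that enumerates $S^{\mathcal{F}}$ without repetition. Since $e$ is definable and each value lies in $S^{\mathcal{F}}$, one obtains a uniformly $\mathcal{F}$-definable array of rational approximations $q_{n,k} \in \mathbb{Q}$ with $e(n) = \lim_{k \to \infty} q_{n,k}$. This uniform representation is the substrate on which the diagonal construction will operate.

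First I would fix a syntactically definable digit-extraction procedure inside $\mathcal{F}$ and, to sidestep the standard ambiguity of positional representations (e.g.\ $0.0\overline{1} = 0.1\overline{0}$), work in base $3$ using only the digits $\{1,2\}$, swapping $1 \leftrightarrow 2$ along the diagonal. Let $r^{\ast}$ denote the real whose $n$-th ternary digit is obtained by applying this swap to the $n$-th digit of $e(n)$. Because $e$, the digit-extraction operator, and the swap are all enumerable syntactic procedures, clause (2) of the definition of a constructive system guarantees that $r^{\ast}$ is defined by a finite derivation in $\mathcal{F}$.

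Next I would verify that $r^{\ast} \in S^{\mathcal{F}}$. Consider the chain with $S_0 = \{q_{n,k} : n,k \in \mathbb{N}\}$, a countable $\mathcal{F}$-definable set, and with extension operator the diagonal procedure above, producing $S_1 = S_0 \cup \{r^{\ast}\}$ (with $S_n = S_1$ for $n \geq 1$). This is a constructive extension chain in $\mathcal{F}$ in the sense of the earlier definition, so $r^{\ast}$ lies in $\bigcup_n S_n$ and hence in $S^{\mathcal{F}}$. By construction $r^{\ast}$ differs from $e(n)$ in its $n$-th ternary digit for every $n$, so $r^{\ast}$ is not in the image of $e$, contradicting the assumption that $e$ enumerates $S^{\mathcal{F}}$.

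The main obstacle is not the diagonal step itself but the justification that the diagonal real $r^{\ast}$ genuinely falls within $S^{\mathcal{F}}$ rather than merely within some richer meta-system. This hinges on the claim that if $e$ is definable in $\mathcal{F}$, then the operator producing $r^{\ast}$ from $e$ is itself a \emph{syntactically definable extension operator in} $\mathcal{F}$ in the precise sense required by the definition of a constructive extension chain. I expect this to reduce to an explicit appeal to clause (2) of the constructive system axioms together with closure of $\mathcal{F}$-definable operations under finite composition; if $\mathcal{F}$ failed to satisfy this minimal closure property, the theorem would have to be reformulated, so the result should be read as saying that any $\mathcal{F}$ strong enough to host its own diagonal procedure cannot enumerate its own fractal boundary.
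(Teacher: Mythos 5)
Your overall strategy (direct Cantor diagonalization against the assumed enumeration, then re-absorbing the diagonal witness into $S^{\mathcal{F}}$ via an explicit one-step extension chain) is sound in outline, but there is one concrete step that fails as written: the digit-extraction procedure. In any constructive or computable setting, a real presented by rational approximations $q_{n,k} \to e(n)$ does \emph{not} admit a definable total digit-extraction operator: deciding the $n$-th ternary digit of $e(n)$ requires comparing $e(n)$ against rational thresholds, and such comparisons are undecidable when equality holds (this is precisely why computable analysis in the style of Weihrauch's text, which the paper cites, works with Cauchy representations rather than positional expansions). Your base-$3$, digits-$\{1,2\}$ device addresses a different and lesser problem — the uniqueness of $r^{\ast}$'s own representation — but leaves the real failure point untouched: the composite map $n \mapsto (\text{$n$-th digit of } e(n))$ is not definable in $\mathcal{F}$, so the appeal to clause (2) of the constructive-system definition does not go through, and $r^{\ast}$ as you specify it need not be $\mathcal{F}$-definable at all. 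The standard repair preserves your plan: diagonalize by nested rational intervals rather than digits. At stage $n$, keep a rational interval $I_n$, query the approximation $q_{n,k}$ at precision below $|I_n|/4$, and pass definably to a subinterval of $I_n$ disjoint from that neighborhood; every comparison is between rationals, hence decidable, and $r^{\ast}$ is delivered with an explicit modulus of convergence. With that substitution your chain argument placing $r^{\ast} \in S^{\mathcal{F}}$ works, modulo two framework-level concessions you already flag: that the paper's notion of \enquote{syntactically definable extension operator} is closed under composition with the definable $e$, and that a definable enumeration $e$ yields a \emph{uniformly} definable approximation array $q_{n,k}$, which the paper's definition of enumerability supports only implicitly.

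It is worth noting that your route is genuinely different from the paper's. The paper constructs no witness at all: it argues meta-theoretically that an enumeration of $S^{\mathcal{F}}$ would force $\mathcal{F}$ to enumerate the totality of its own definable chains, and then appeals to Kleene's recursion theorem and G\"odel's second incompleteness theorem to rule this out. Your (repaired) argument is more elementary and strictly more informative: for each purported enumeration it exhibits an explicit element of $S^{\mathcal{F}}$ that the enumeration omits — essentially the classical proof that the computable reals are not computably enumerable, transplanted to the fractal-boundary setting. Given how loosely the paper's own proof invokes its cited theorems, the interval-based version of your diagonal is arguably the tighter of the two arguments; the digit-based version, however, is not salvageable without the substitution described above.
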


\begin{proof}
Suppose, for contradiction, that there exists a definable total function \( e \in \mathcal{F} \) such that \( e(n) \in S^{\mathcal{F}} \) for all \( n \in \mathbb{N} \), and the image of \( e \) covers all of \( S^{\mathcal{F}} \). Then all constructive extension chains definable in \( \mathcal{F} \) would be effectively encoded within a single enumeration procedure.

However, such a function would require \( \mathcal{F} \) to define the totality of its own definable sets — i.e., to enumerate all syntactically constructible functions and extensions expressible in \( \mathcal{F} \) itself. This contradicts general results in logic, including:

\begin{itemize}
    \item Kleene’s Recursion Theorem \cite{SoareRecursion}: which shows that no effectively enumerable system can enumerate all total computable functions definable within it.
    \item Gödel’s Second Incompleteness Theorem: no sufficiently expressive system can prove the totality (or consistency) of its own syntactic universe \cite{Simpson2009}.
\end{itemize}

Therefore, \( \mathcal{F} \) cannot define a function \( e \) enumerating all of \( S^{\mathcal{F}} \), and so \( S^{\mathcal{F}} \) is not enumerable within \( \mathcal{F} \).
\end{proof}

\begin{corollary}
There is no single constructive procedure inside \( \mathcal{F} \) that generates all elements of \( S^{\mathcal{F}} \).
\end{corollary}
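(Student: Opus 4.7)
The plan is to reduce the corollary directly to \Cref{thm:nonenum} by observing that any alleged constructive generating procedure in $\mathcal{F}$ would, after a routine normalization, supply precisely the kind of definable enumeration $e:\mathbb{N}\to S^{\mathcal{F}}$ that the non-enumerability theorem forbids. So the bulk of the work is conceptual rather than computational: pinning down what "constructive procedure" has to mean in the language of $\mathcal{F}$, and then showing that generation is equivalent, up to de-duplication, to enumeration in the sense of the earlier definition.

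First, I would unpack "constructive procedure inside $\mathcal{F}$" using the definition of constructive system: such a procedure must be represented by a finite syntactic object (a derivation or algorithm) whose stage-$n$ output is a term definable in $\mathcal{F}$. This immediately yields a definable total map $g:\mathbb{N}\to S^{\mathcal{F}}$ whose image exhausts $S^{\mathcal{F}}$ by the generation hypothesis. Next, I would extract from $g$ a repetition-free enumeration $e$ in the sense of \Cref{thm:nonenum}: define $e(n)$ to be $g(k_n)$, where $k_n$ is the least index producing a code distinct from all previously enumerated codes. This search is itself expressible inside $\mathcal{F}$ because it operates on syntactic representations, not on real-number values.

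With $e$ in hand, the final step is immediate: $e$ is a definable total function in $\mathcal{F}$ whose image is all of $S^{\mathcal{F}}$ and which lists each element once, contradicting \Cref{thm:nonenum}. Hence no such generating procedure can exist.

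The main obstacle I anticipate is the de-duplication step, because constructive equality between two codes for real numbers is generally undecidable in $\mathcal{F}$; one cannot in general test $g(i)=g(j)$ as an equality of reals. My resolution is to read "without repetition" at the syntactic level rather than the semantic one: an enumeration of codes suffices, since it surjects onto the underlying set $S^{\mathcal{F}}$, and the contradiction in \Cref{thm:nonenum} goes through equally well under this weaker reading. If one insisted on semantic non-repetition, the corollary would actually become \emph{stronger}, not weaker, since the obstruction would then include the undecidability of real equality on top of the meta-theoretical self-enumeration barrier invoked via Kleene's recursion theorem and Gödel's second incompleteness.
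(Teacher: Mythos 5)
Your proposal is correct and matches the paper's (implicit) argument: the paper states this corollary without proof, treating it as an immediate reduction of ``generation'' to ``enumeration'' via \Cref{thm:nonenum}, which is exactly your route. Your extra care about de-duplication is a reasonable patch to a real wrinkle in the definition of enumerability, and your observation that the contradiction in \Cref{thm:nonenum} only ever uses surjectivity of \( e \) (never the ``without repetition'' clause) is accurate, so the reduction goes through.
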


\section{Ultimate Constructive Closure and Fractal Union}

In light of both internal and stratified approaches to definability, we consider a maximal constructive closure obtained by combining all possible internal chains across all countable formal systems.

\begin{definition}[Fractal Super-System]
Let \( \mathcal{F}_\omega \) denote the (meta-theoretical) union of all countable constructive formal systems. We define:
\[
S^{\mathcal{F}_\omega} := \bigcup_{\mathcal{F} \in \text{Constructive}} S^{\mathcal{F}},
\]
where each \( S^{\mathcal{F}} \) is the internal fractal boundary defined within system \( \mathcal{F} \).
\end{definition}

\begin{theorem}
The set \( S^{\mathcal{F}_\omega} \subseteq \mathbb{R} \) is countable, but no constructive system, including \( \mathcal{F}_\omega \), can uniformly enumerate all of its elements.
\end{theorem}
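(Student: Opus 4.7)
The plan is to separate the two claims and reduce each to results already established. The countability claim is routine bookkeeping over \Cref{thm:closure}; the non-enumerability claim follows by reducing any hypothetical uniform enumeration to one of the shape forbidden by \Cref{thm:nonenum}.

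For countability, I would first observe that the meta-class of constructive formal systems is itself countable up to syntactic equivalence. Each such $\mathcal{F}$ has a countable alphabet and enumerable inference rules, and hence admits a recursive finite presentation; the collection of all such presentations is itself enumerable in the meta-theory. Indexing the systems as $\mathcal{F}_1, \mathcal{F}_2, \ldots$, the definition of $\mathcal{F}_\omega$ then gives
\[
S^{\mathcal{F}_\omega} \;=\; \bigcup_{i \in \mathbb{N}} S^{\mathcal{F}_i}.
\]
By \Cref{thm:closure} each $S^{\mathcal{F}_i}$ is countable, so the whole union is a countable union of countable sets, hence countable.

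For non-enumerability I would argue by contradiction. Suppose some constructive system $\mathcal{G}$ defines a total function $e : \mathbb{N} \to S^{\mathcal{F}_\omega}$ whose image is all of $S^{\mathcal{F}_\omega}$. Set $S_0 = \{e(0)\}$ and $S_{n+1} = S_n \cup \{e(n+1)\}$; since $e$ is definable in $\mathcal{G}$ this is a bona fide constructive extension chain inside $\mathcal{G}$, so $\mathrm{image}(e) \subseteq S^{\mathcal{G}}$. Combined with the trivial $S^{\mathcal{G}} \subseteq S^{\mathcal{F}_\omega}$, this forces $S^{\mathcal{G}} = S^{\mathcal{F}_\omega}$, and $e$ would therefore enumerate $S^{\mathcal{G}}$ from within $\mathcal{G}$, contradicting \Cref{thm:nonenum}. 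The clause about $\mathcal{F}_\omega$ itself is absorbed by the same move: any attempt to treat $\mathcal{F}_\omega$ as a single constructive system with a uniformly enumerable rule set would place it among the $\mathcal{F}_i$, and the preceding reasoning reapplies.

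The main obstacle I anticipate is foundational rather than technical: one must enumerate the class of constructive systems at the meta-level without smuggling in precisely the enumerative power being denied. The cleanest route, I think, is to fix an effective Gödel coding of finite presentations and to read \emph{constructive} in the Definition of Constructive System as \emph{recursively presented over this coding}, so that $\{\mathcal{F}_i\}$ is a $\Sigma_1$-enumerable class of the meta-theory rather than a proper class. Once this is pinned down, the countable-union bookkeeping and the reduction to \Cref{thm:nonenum} are straightforward, and the apparent tension — a countable set that no constructive system can list — becomes the intended feature of the Fractal Origin Hypothesis rather than a contradiction.
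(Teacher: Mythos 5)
Your proposal is correct at the paper's level of rigor, and its two halves relate differently to the paper's own proof. The countability half is essentially the paper's argument: the paper also treats \( S^{\mathcal{F}_\omega} \) as a countable union of the countable sets \( S^{\mathcal{F}} \), though it leaves implicit the countability of the class of systems, which you make explicit via finite presentations (the paper only states this later, inside the proof of the proposition on \( |\mathbb{F}_\omega| \)). The non-enumerability half is where you genuinely diverge, to your advantage. The paper's proof is a two-sentence gesture: no single system can define all chains across all systems, hence by \enquote{a meta-extension of \Cref{thm:nonenum}} the set is not enumerable --- an appeal to an unstated and unproved generalization. You instead give an absorption reduction: any total enumeration \( e \) of \( S^{\mathcal{F}_\omega} \) definable in a constructive \( \mathcal{G} \) yields, via the chain \( S_{n+1} = S_n \cup \{e(n+1)\} \), the inclusion \( \mathrm{image}(e) \subseteq S^{\mathcal{G}} \), forcing \( S^{\mathcal{G}} = S^{\mathcal{F}_\omega} \), so that \Cref{thm:nonenum} applies verbatim to \( \mathcal{G} \); the clause about \( \mathcal{F}_\omega \) itself is then handled uniformly, since if \( \mathcal{F}_\omega \) qualifies as a constructive system it is just another \( \mathcal{G} \). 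This buys an actual proof where the paper has a hand-wave, and it directly defuses the objection your own Gödel coding raises: a universal system simulating all \( \mathcal{F}_i \) might seem able to enumerate the union, but your argument shows any such enumeration lands inside that system's own fractal boundary and trips \Cref{thm:nonenum}. Two minor points: the paper's extension operator should depend on \( S_n \) alone rather than on \( n \), which you can arrange because the paper's notion of enumeration is without repetition, so \( n \) is recoverable from \( |S_n| \); and your reduction is only as sound as \Cref{thm:nonenum} itself, whose proof in the paper is heuristic --- but conditional on that theorem, your route is strictly tighter than the paper's.
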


\begin{proof}
Each internal boundary \( S^{\mathcal{F}} \) is countable, so the union \( S^{\mathcal{F}_\omega} \) is also countable. However, no single system can define or enumerate all syntactic chains arising across all systems simultaneously. Thus, by a meta-extension of Theorem~\ref{thm:nonenum}, \( S^{\mathcal{F}_\omega} \) is not constructively enumerable.
\end{proof}

\begin{remark}
This set \( S^{\mathcal{F}_\omega} \) represents a meta-constructive horizon: a union of all internally reachable definable fragments. It serves as a constructive analog of the classical power set \( \mathcal{P}(\mathbb{N}) \): countable in cardinality, but unreachable by any uniform constructive process.
\end{remark}

\section{Two Modes of Stratified Definability}

We now distinguish two conceptually different constructions of the definability hierarchy \( \{S_n\} \):

\begin{itemize}
    \item \textbf{Internal Stratification}: A single formal system \( \mathcal{F} \) defines a sequence \( \{S_n\} \) via syntactically definable extension operators. Each step is a definable transformation internal to \( \mathcal{F} \), yielding the internal fractal boundary \( S^{\mathcal{F}} \) \cite{Semenov2025FractalBoundaries}.

    \item \textbf{Hierarchical Stratification}: A sequence \( \{ \mathcal{F}_n \} \) of increasingly powerful formal systems defines corresponding layers \( S_n \), each containing all real numbers whose representations are derivable in \( \mathcal{F}_n \) \cite{Semenov2025FractalAnalysis}.

    In this model, the proof-theoretic ordinal of \( \mathcal{F}_n \) is used to measure definitional depth, and the total definable universe is given by
    \[
    S_\omega := \bigcup_{n \in \mathbb{N}} S_n.
    \]
\end{itemize}

These two constructions reflect complementary perspectives: internal generativity vs. external stratification.

\section{Stratified Definability over Expanding Systems}

\begin{definition}[Hierarchical Chain of Formal Systems]
Let \( \{ \mathcal{F}_n \}_{n \in \mathbb{N}} \) be a sequence of constructive formal systems such that:
\begin{itemize}
    \item \( \mathcal{F}_0 \) is a fixed syntactic base (e.g., \( \mathsf{RCA}_0 \)) sufficient to define arithmetic and rational constructions;
    \item For each \( n \), \( \mathcal{F}_{n+1} \) is a conservative syntactic extension of \( \mathcal{F}_n \) with strictly greater definitional strength;
    \item The proof-theoretic ordinal of \( \mathcal{F}_n \) is at most \( \alpha(n) \), where \( \alpha \colon \mathbb{N} \to \mathrm{Ord} \) is strictly increasing \cite{Simpson2009,Bridges1986}.
\end{itemize}
\end{definition}

\begin{definition}[Stratified Definability Levels]
For each \( n \in \mathbb{N} \), let \( S_n \subseteq \mathbb{R} \) denote the set of real numbers whose rational approximations and convergence witnesses are definable in \( \mathcal{F}_n \). The resulting sequence \( \{ S_n \} \) is called the \emph{stratified definability hierarchy}.
\end{definition}

The expressive power of each system \( \mathcal{F}_n \) determines the constructive content of the corresponding level \( S_n \). As \( n \) increases, the definability horizon expands.

\section{Ultimate Stratified Closure and Hierarchical Union}

In analogy with the internal model of fractal closure, we now consider the maximal stratified closure obtained by taking the union over all admissible hierarchies of formal systems.

\begin{definition}[Stratified Closure for a Chain]
Let \( \{ \mathcal{F}_n \}_{n \in \mathbb{N}} \) be a stratified sequence of formal systems. Let \( S_n \) denote the set of real numbers definable in \( \mathcal{F}_n \). Then the total stratified closure associated with this chain is
\[
S_\omega^{\{ \mathcal{F}_n \}} := \bigcup_{n=0}^\infty S_n.
\]
\end{definition}

\noindent
Each term \( S_n \) denotes the class of real numbers definable in system \( \mathcal{F}_n \) via rational approximations and convergence witnesses. Thus, \( S_\omega^{\{\mathcal{F}_n\}} \) collects all such reals along the chain.

\begin{remark}
This construction differs from the internal extension model \( S^{\mathcal{F}} \), where the definability chain is generated inside a single system \( \mathcal{F} \) via internal operators. The stratified hierarchy \( \{ S_n \} \) reflects external increases in formal strength. The internal model can be recovered as a special case by taking \( \mathcal{F}_n = \mathcal{F}_0 \) for all \( n \).
\end{remark}

\begin{definition}[Admissible Stratified Chains \( \mathbb{F}_\omega \)]
Let \( \mathcal{F}_0, \mathcal{F}_1, \mathcal{F}_2, \dots \) be formal systems such that each \( \mathcal{F}_{n+1} \) is a conservative extension of \( \mathcal{F}_n \) with strictly increasing expressive power. We define \( \mathbb{F}_\omega \) to be the class of all such countably infinite stratified chains:
\[
\mathbb{F}_\omega := \left\{ \{ \mathcal{F}_n \}_{n \in \mathbb{N}} \mid \mathcal{F}_n \subsetneq \mathcal{F}_{n+1},\ \text{each } \mathcal{F}_n \text{ is a constructive formal system} \right\}.
\]
\end{definition}

\begin{proposition}[Cardinality of \( \mathbb{F}_\omega \)]
The class \( \mathbb{F}_\omega \) has the cardinality of the continuum:
\[
|\mathbb{F}_\omega| = \mathfrak{c}.
\]
\begin{proof}
Each constructive formal system \( \mathcal{F}_n \) can be encoded by a finite string over a finite alphabet; hence the set of all such systems is countable.

Let \( \{ \mathcal{F}_i \}_{i \in \mathbb{N}} \) be a fixed enumeration of all constructive formal systems. Then any admissible stratified chain is represented by a strictly increasing function \( f: \mathbb{N} \to \mathbb{N} \) such that \( \mathcal{F}_{f(n)} \subsetneq \mathcal{F}_{f(n+1)} \). The set of all such functions is in bijection with the set of infinite strictly increasing sequences over \( \mathbb{N} \), which has cardinality \( \mathfrak{c} \) \cite{Weihrauch2000,Simpson2009}.

Moreover, each chain \( \{ \mathcal{F}_{f(n)} \} \) defines a countable subset \( S_\omega^{\{ \mathcal{F}_n \}} \subseteq \mathbb{R} \), and the variation across such chains is sufficient to generate a union of cardinality \( \mathfrak{c} \), since definability along distinct syntactic growth paths may yield non-overlapping real numbers.

Therefore, the total definitional space \( \mathbb{F}_\omega \) spans a set of cardinality \( \mathfrak{c} \), both combinatorially and constructively.

\begin{remark}
This cardinality argument remains entirely constructive: it involves only countable encodings, finite descriptions of formal systems, and explicit mappings between increasing sequences and binary strings. No use is made of the power set axiom, the axiom of choice, or any nonconstructive existence principles.
\end{remark}
\end{proof}
\end{proposition}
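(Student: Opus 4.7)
The plan is to prove $|\mathbb{F}_\omega| = \mathfrak{c}$ by matching upper and lower bounds, neither of which requires the power set axiom. For the upper bound, I would first observe that every constructive formal system admits a finite presentation in a fixed meta-syntax (finite alphabet, enumerable rules), so the collection $\mathcal{C}$ of all constructive formal systems is countable. An admissible chain is then simply a function $\mathbb{N} \to \mathcal{C}$, so $|\mathbb{F}_\omega| \le |\mathcal{C}|^{|\mathbb{N}|} = \aleph_0^{\aleph_0} = \mathfrak{c}$.

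For the lower bound, my strategy is to exhibit $\mathfrak{c}$ distinct chains by thinning a single reference tower. First I would fix a sequence $\mathcal{R}_0 \subsetneq \mathcal{R}_1 \subsetneq \mathcal{R}_2 \subsetneq \cdots$ of constructive systems of strictly increasing strength---e.g., by iterating uniform reflection over $\mathsf{RCA}_0$, so each $\mathcal{R}_{n+1}$ is a proper conservative extension of $\mathcal{R}_n$ with strictly larger proof-theoretic ordinal. Then to every infinite subset $A = \{a_0 < a_1 < \cdots\} \subseteq \mathbb{N}$ I would associate the subsequence $\{\mathcal{R}_{a_n}\}_{n \in \mathbb{N}}$, which itself lies in $\mathbb{F}_\omega$. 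Distinct subsets yield distinct chains, and since the set of infinite subsets of $\mathbb{N}$ has cardinality $\mathfrak{c}$, this gives $|\mathbb{F}_\omega| \ge \mathfrak{c}$.

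The main obstacle I anticipate is justifying the reference tower: producing an explicit iterative syntactic process which, at each stage, yields a system honestly stronger than its predecessor. I would address this proof-theoretically, leaning on the strictly increasing ordinal assignment $\alpha \colon \mathbb{N} \to \mathrm{Ord}$ already introduced in the paper's definition of hierarchical chains---if $\alpha(n) < \alpha(n+1)$ then $\mathcal{R}_n \ne \mathcal{R}_{n+1}$, and strictness of inclusion follows automatically along any subsequence. A secondary subtlety is that the correspondence between chains and infinite subsets of $\mathbb{N}$ should not covertly invoke choice over $\mathcal{P}(\mathbb{N})$; I would phrase the argument so that each chain is treated as its own finite-code sequence indexed by $\mathbb{N}$, keeping the quantification in the constructivist register the paper insists upon.
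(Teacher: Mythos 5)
Your proposal is correct, and it is in fact tighter than the paper's own argument, which it otherwise parallels. The paper proves the result in one pass: it encodes all constructive systems as finite strings, fixes an enumeration \( \{\mathcal{F}_i\}_{i \in \mathbb{N}} \), and asserts that admissible chains are \emph{in bijection} with strictly increasing functions \( f\colon \mathbb{N} \to \mathbb{N} \), whence cardinality \( \mathfrak{c} \). That bijection claim is not justified as stated: an arbitrary strictly increasing \( f \) gives no guarantee that \( \mathcal{F}_{f(n)} \subsetneq \mathcal{F}_{f(n+1)} \) under the fixed enumeration, so the paper's argument really only delivers the upper bound, leaving the lower bound implicit. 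Your decomposition repairs exactly this: the upper bound \( |\mathbb{F}_\omega| \le \aleph_0^{\aleph_0} = \mathfrak{c} \) is the honest content of the paper's encoding step, and your thinning construction --- fix one reference tower \( \mathcal{R}_0 \subsetneq \mathcal{R}_1 \subsetneq \cdots \) and map each infinite \( A \subseteq \mathbb{N} \) to the subsequence \( \{\mathcal{R}_{a_n}\} \), injectively since the tower is strict --- supplies the \( \mathfrak{c} \)-many witnesses the paper's bijection claim presupposes. Your injection is sound because a chain determines its set of members, and distinct infinite subsets of the strictly increasing tower yield distinct member-sets. (Note that the paper's proof also digresses into the countability of each \( S_\omega^{\{\mathcal{F}_n\}} \) and the size of the union of definable reals, which is irrelevant to \( |\mathbb{F}_\omega| \) itself; you correctly omit this.)

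One caveat you inherit from the paper rather than introduce: your reference tower via iterated uniform reflection gives strictly increasing strength (by G\"odel's second incompleteness theorem, reflection implies consistency, so \( \mathcal{R}_{n+1} \ne \mathcal{R}_n \)), but it is \emph{not} conservative over its predecessor in the shared language, whereas the paper's definition of \( \mathbb{F}_\omega \) demands conservative extensions of strictly greater strength --- a requirement that is in tension with itself unless conservativity is restricted to a proper sublanguage or subclass of formulas. Since the paper never resolves this, your tower is as good a witness as any available; it would be worth flagging explicitly that strictness, not conservativity, is what your lower bound actually uses.
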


\begin{definition}[Universal Stratified Union]
Let \( \mathbb{F}_\omega \) denote the class of all stratified chains \( \{ \mathcal{F}_n \} \). Then define:
\[
S^{\mathbb{F}_\omega} := \bigcup_{\{ \mathcal{F}_n \} \in \mathbb{F}_\omega} S_\omega^{\{ \mathcal{F}_n \}}.
\]
\end{definition}

\begin{remark}
The notation \( \mathcal{F}_\omega \) refers to the meta-theoretical collection of all countable constructive systems considered individually, while \( \mathbb{F}_\omega \) denotes the class of admissible stratified chains \( \{ \mathcal{F}_n \} \) used to define layered constructive growth. The former aggregates internal boundaries; the latter, external definability hierarchies.

To illustrate the difference, consider the following cases:

\begin{itemize}
    \item Let \( \mathcal{F} \) be a fixed base system such as \( \mathsf{HA} \) (Heyting Arithmetic). Then \( S^{\mathcal{F}} \subseteq \mathbb{R} \) consists of all reals definable via internal chains inside \( \mathcal{F} \). However, any hierarchy \( \{ \mathcal{F}_n \} \) where each \( \mathcal{F}_n \) strictly extends \( \mathcal{F} \), with increasing proof-theoretic strength, may define reals inaccessible to \( \mathcal{F} \). Thus, such chains contribute to \( S^{\mathbb{F}_\omega} \), but not to \( S^{\mathcal{F}_\omega} \).

    \item Conversely, \( S^{\mathcal{F}_\omega} \) includes internal closures across all individual systems \( \mathcal{F} \in \mathcal{F}_\omega \), even those that are not members of any strictly ascending stratified chain. For example, a system \( \mathcal{F}^* \) could be maximal in definitional strength (e.g., a constructive theory with non-standard arithmetic), but not part of any conservative hierarchy \( \{ \mathcal{F}_n \} \). In that case, \( S^{\mathcal{F}^*} \subseteq S^{\mathcal{F}_\omega} \), but not necessarily included in any \( S^{\mathbb{F}_\omega} \) arising from strict syntactic layering.

    \item Furthermore, two distinct stratified chains \( \{ \mathcal{F}_n \}, \{ \mathcal{G}_n \} \in \mathbb{F}_\omega \) may define overlapping but non-equal sets:
    \[
    S_\omega^{\{ \mathcal{F}_n \}} \cap S_\omega^{\{ \mathcal{G}_n \}} \neq \emptyset,\quad \text{but } S_\omega^{\{ \mathcal{F}_n \}} \neq S_\omega^{\{ \mathcal{G}_n \}}.
    \]
    For instance, one hierarchy may privilege analytic definitions, while another emphasizes algebraic constructions. Their definable reals may intersect (e.g., rationals, common computables), yet diverge elsewhere.
\end{itemize}

These examples show that \( S^{\mathcal{F}_\omega} \) and \( S^{\mathbb{F}_\omega} \) are orthogonal aspects of fractal definability: the former explores internal generative power across systems; the latter tracks layered external definability across syntactic evolution.
\end{remark}

\begin{theorem}[Fractal Countability of the Universal Union]
The set \( S^{\mathbb{F}_\omega} \subseteq \mathbb{R} \), defined as the union of all stratified definable layers over all admissible chains \( \{ \mathcal{F}_n \} \in \mathbb{F}_\omega \), is of cardinality \( \mathfrak{c} \). However, it is not uniformly enumerable in any single constructive system or within any single stratified chain.
\end{theorem}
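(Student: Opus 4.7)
The plan is to decompose the theorem into its cardinality claim and its non-enumerability claim, and handle each in turn, leaning on the preceding proposition ($|\mathbb{F}_\omega| = \mathfrak{c}$) together with Theorem~\ref{thm:nonenum}.

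For the cardinality, the upper bound $|S^{\mathbb{F}_\omega}| \leq \mathfrak{c}$ is immediate since $S^{\mathbb{F}_\omega} \subseteq \mathbb{R}$. For the lower bound, I would construct an injection $\Phi : \mathbb{F}_\omega \to S^{\mathbb{F}_\omega}$ sending each chain $\{\mathcal{F}_n\}$ to a canonical real $r_{\{\mathcal{F}_n\}} \in S_\omega^{\{\mathcal{F}_n\}}$ whose $n$-th rational approximant is definable in $\mathcal{F}_n$ and records a distinguishing feature of the chain at level $n$---for instance, a G\"odel number of a formula newly expressible in $\mathcal{F}_n$ but not in $\mathcal{F}_{n-1}$. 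Distinct chains then yield distinct signatures, so $\Phi$ is injective, and the bound $|\mathbb{F}_\omega| = \mathfrak{c}$ transfers to the image.

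For non-enumerability within any fixed constructive system $\mathcal{F}$, a definable enumeration $e: \mathbb{N} \to S^{\mathbb{F}_\omega}$ would have countable image and thus cannot be surjective onto a set of cardinality $\mathfrak{c}$; alternatively, such an $e$ would restrict to an enumeration of $S^{\mathcal{F}} \subseteq S^{\mathbb{F}_\omega}$ inside $\mathcal{F}$, directly contradicting Theorem~\ref{thm:nonenum}. For non-enumerability within any single stratified chain $\{\mathcal{F}_n\}$, the associated closure $S_\omega^{\{\mathcal{F}_n\}}$ is countable as a countable union of countable sets, so it cannot serve as the range of a surjection onto a continuum-sized target.

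The main obstacle is making the injection $\Phi$ actually work inside the paper's framework. Since the class of constructive formal systems is itself countable, the theorem of Section~3 already forces $S^{\mathcal{F}_\omega} = \bigcup_{\mathcal{F}} S^{\mathcal{F}}$ to be countable at the object level. Reconciling this with the claim $|S^{\mathbb{F}_\omega}| = \mathfrak{c}$ requires interpreting chain-relative definability as strictly richer than pointwise object-level definability: each admissible chain must contribute reals that are accessible only as limits along its particular syntactic trajectory, not within any finite stage $\mathcal{F}_n$. Pinning down this enrichment precisely---so that chain-indexed limiting data genuinely yields $\mathfrak{c}$-many distinct reals rather than collapsing back into the countable pool $\bigcup_{\mathcal{F}} S^{\mathcal{F}}$---is the delicate step, and the place where the paper's philosophical framing must be converted into an explicit construction.
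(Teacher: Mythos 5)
Your decomposition (cardinality via \( |\mathbb{F}_\omega| = \mathfrak{c} \) plus an injection into the union, non-enumerability via countability of any single system's or chain's output) follows the same skeleton as the paper's own sketch, and your counting argument for non-enumerability is sound \emph{conditional} on the cardinality claim. But the injection \( \Phi \) is not merely ``delicate'' --- under the paper's definitions it is impossible, and you have in fact put your finger on exactly why. By definition, \( S_\omega^{\{\mathcal{F}_n\}} = \bigcup_n S_n \), where every element of \( S_n \) is definable in the single system \( \mathcal{F}_n \); hence every element of \( S^{\mathbb{F}_\omega} \) is definable in \emph{some} constructive formal system. Since (as the paper itself establishes in proving \( |\mathbb{F}_\omega| = \mathfrak{c} \)) there are only countably many such systems, each contributing countably many reals, \( S^{\mathbb{F}_\omega} \) is a countable union of countable sets at the object level. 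No map from a class of size \( \mathfrak{c} \) into a countable set is injective, so no choice of ``signature reals'' can succeed: your canonical real for a chain must already lie in some finite stage \( S_N \), hence can record at most the first \( N \) systems of the chain, and continuum-many chains share any fixed finite prefix. Your closing paragraph identifies this collapse precisely.

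It is worth saying plainly that this is not a defect of your proposal alone: the paper's proof sketch has the same gap. It asserts that ``different chains may yield non-overlapping definable elements'' and concludes cardinality \( \mathfrak{c} \), but pairwise non-overlap across \( \mathfrak{c} \)-many chains would require \( \mathfrak{c} \)-many distinct reals drawn from a pool that the definitions render countable; the sketch never constructs the needed enrichment. Your diagnosis --- that the theorem could only be saved by making chain-relative definability strictly richer than stagewise definability, e.g.\ by admitting limit objects definable along a chain but in no single \( \mathcal{F}_n \) --- is exactly the missing idea, and the paper supplies no such definition. One smaller caution on your fallback non-enumerability argument: the claim that an enumeration of \( S^{\mathbb{F}_\omega} \) restricts to an enumeration of \( S^{\mathcal{F}} \) fails twice over, since the paper's own remark notes that \( S^{\mathcal{F}} \) need not be contained in \( S^{\mathbb{F}_\omega} \) at all, and even for a genuine subset, restricting a definable enumeration requires a definable membership test for that subset, which nothing in Theorem~\ref{thm:nonenum} provides. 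Keep the cardinality-based version of that argument instead --- conditional, as it must be, on the cardinality claim itself standing.
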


\begin{proof}[Sketch of Proof]
Each individual stratified chain \( \{ \mathcal{F}_n \} \) gives rise to a countable set \( S_\omega^{\{ \mathcal{F}_n \}} \subseteq \mathbb{R} \), as it is a union of countably many definable subsets. The collection of all such chains \( \mathbb{F}_\omega \) has cardinality \( \mathfrak{c} \), and different chains may yield non-overlapping definable elements. Hence, their union \( S^{\mathbb{F}_\omega} \) has cardinality \( \mathfrak{c} \).

However, since no single constructive system can represent all admissible chains, and each system only provides access to its internal hierarchy, there exists no global constructive enumeration procedure that lists all elements of \( S^{\mathbb{F}_\omega} \).
\end{proof}

\begin{center}
\renewcommand{\arraystretch}{1.3}
\begin{adjustbox}{max width=\textwidth}
\begin{tabular}{@{} lcc @{}}
\toprule
 & \textbf{\( S^{\mathcal{F}_\omega} \)} & \textbf{\( S^{\mathbb{F}_\omega} \)} \\
\midrule
Type of growth          & Internal (within systems) & External (across chains) \\
Generators              & All individual systems \( \mathcal{F} \) & All stratified chains \( \{ \mathcal{F}_n \} \) \\
Included elements       & Union of internal chains & Union of layered definability paths \\
Includes non-stratified & Yes                      & No (only hierarchies) \\
Overlap with each other & Partial                  & Partial \\
Enumerability           & Not enumerable in any \( \mathcal{F} \) & Not enumerable globally \\
\bottomrule
\end{tabular}
\end{adjustbox}
\end{center}

\begin{remark}
The set \( S^{\mathbb{F}_\omega} \) serves as a universal constructive closure under all admissible directions of definability growth. Unlike \( S^{\mathcal{F}_\omega} \), which represents internal growth of a single system, this union represents the external horizon of constructive reachability across all possible stratified developments.
\end{remark}

\section{Fractal Countability as a Continuum Model}

Classical set theory treats the continuum \( \mathbb{R} \) as a completed uncountable totality, typically defined via the power set of \( \mathbb{N} \) or via Dedekind cuts and Cauchy sequences modulo equivalence \cite{Cantor1883,Simpson2009}. However, from a constructive or formal perspective, such a set is never fully realizable: only countable definable fragments can be realized within any formal system \( \mathcal{F} \) governed by syntactically enumerable rules \cite{Bishop1967,BridgesRichman1987,Semenov2025FractalBoundaries}.

We propose to replace the continuum's cardinal postulation \( |\mathbb{R}| = 2^{\aleph_0} \) with a process-relative, stratified view \cite{Semenov2025FractalAxiomatic,Semenov2025FractalAnalysis}: each formal system defines a countable layer \( \mathbb{R}_{S_n} \) of real numbers, and the cumulative union \( \mathbb{R}_{S_\omega} = \bigcup_{n < \omega} \mathbb{R}_{S_n} \) forms a \emph{fractal continuum}, countable yet growing in definitional depth.

This approach preserves constructive integrity while modeling the continuum as an open horizon of definability, rather than as a fixed uncountable object.

\begin{definition}[Fractally Countable Continuum]
Let \( \{ \mathcal{F}_n \}_{n \in \mathbb{N}} \) be an ascending chain of conservative extensions over a base formal system \( \mathcal{F}_0 \). For each \( n \), let \( \mathbb{R}_{S_n} \subset \mathbb{R} \) denote the class of real numbers constructively definable in \( \mathcal{F}_n \). Then the \emph{fractally countable continuum} is defined as:
\[
\mathbb{R}_{S_\omega} := \bigcup_{n=0}^\infty \mathbb{R}_{S_n}.
\]
We say that \( \mathbb{R}_{S_\omega} \) is fractally countable: it is countable as a set, but not uniformly enumerable within any single formal system.
\end{definition}

\begin{definition}[Fractal Degree]
Let \( r \in \mathbb{R}_{S_\omega} \). The \emph{fractal degree} of \( r \), denoted \( \deg_{\mathcal{F}}(r) \), is the least index \( n \in \mathbb{N} \) such that \( r \in \mathbb{R}_{S_n} \).
\end{definition}

\begin{theorem}[Stratified Approximation of the Continuum]
Let \( \mathcal{F}_n \) be a sequence of constructive systems such that each \( \mathcal{F}_{n+1} \) strictly extends \( \mathcal{F}_n \) by definitional strength. Then:
\begin{enumerate}
    \item Each layer \( \mathbb{R}_{S_n} \subset \mathbb{R} \) is countable and syntactically definable within \( \mathcal{F}_n \);
    \item The union \( \mathbb{R}_{S_\omega} = \bigcup_{n=0}^\infty \mathbb{R}_{S_n} \) is countable but not uniformly enumerable in any single formal system \( \mathcal{F}_k \);
    \item For any real number \( r \in \mathbb{R}_{S_\omega} \), there exists a minimal \( n \) such that \( r \in \mathbb{R}_{S_n} \). This \( n \) is called the \emph{fractal degree} of \( r \).
\end{enumerate}
\end{theorem}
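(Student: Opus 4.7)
The plan is to address the three conclusions in sequence, leaning on results already established in the paper. For part (1), I would invoke Definition~1.1 directly: each $\mathcal{F}_n$ has countable syntax and every definable object is a finite derivation or algorithm, so the set of (rational-approximation procedure, convergence-witness) pairs expressible in $\mathcal{F}_n$ injects into the countable set of finite $\mathcal{F}_n$-expressions. Passing to the quotient by equality of reals yields countability of $\mathbb{R}_{S_n}$, and syntactic definability in $\mathcal{F}_n$ is immediate from the defining property of $\mathbb{R}_{S_n}$.

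For part (2), countability of $\mathbb{R}_{S_\omega}=\bigcup_n \mathbb{R}_{S_n}$ follows from the closure of $\aleph_0$ under countable sums (as already used in Theorem~\ref{thm:closure}). For the non-enumerability half I would adapt the argument of Theorem~\ref{thm:nonenum}: assume for contradiction that some fixed $\mathcal{F}_k$ defines a total $e\colon\mathbb{N}\to\mathbb{R}_{S_\omega}$ whose image is all of $\mathbb{R}_{S_\omega}$. Because the chain is strictly ascending in proof-theoretic strength, for each $n>k$ the level $\mathbb{R}_{S_n}\setminus\mathbb{R}_{S_k}$ is non-empty and its elements come equipped with convergence witnesses that require the definitional resources of $\mathcal{F}_n$. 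Having $\mathcal{F}_k$ enumerate these witnesses would force $\mathcal{F}_k$ to internally certify derivations belonging to arbitrarily stronger systems, contradicting the ordinal bound $\alpha(k)<\alpha(n)$ together with Gödel's second incompleteness theorem and Kleene's recursion theorem as already cited.

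Part (3) reduces to the well-ordering of $\mathbb{N}$: for any $r\in\mathbb{R}_{S_\omega}$ the set $\{n\in\mathbb{N}\mid r\in\mathbb{R}_{S_n}\}$ is non-empty by the very definition of the union, and hence has a least element, which I would take as $\deg_{\mathcal{F}}(r)$. That this minimum is stable under the chain uses only that $\mathcal{F}_{n+1}$ is a syntactic extension of $\mathcal{F}_n$, so every $\mathcal{F}_n$-derivation of a convergence witness is automatically valid in $\mathcal{F}_{n+1}$, yielding the nesting $\mathbb{R}_{S_n}\subseteq\mathbb{R}_{S_{n+1}}$.

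The main obstacle is the non-enumerability half of part (2): making precise the sense in which $\mathcal{F}_k$ cannot internally \emph{represent} convergence witnesses originating in $\mathcal{F}_n$ with $n>k$. Conservativity is assumed only for a restricted sentence class, so one must argue that the additional reals arising at higher levels correspond to statements whose witnesses truly lie outside $\mathcal{F}_k$-definability, rather than being merely rephrased versions of $\mathcal{F}_k$-theorems. A clean route is to fix a Hilbert--Bernays-style satisfaction predicate within each $\mathcal{F}_n$ and exploit the unboundedness of $\alpha(n)$ to separate the layers: reflection up to $\alpha(n)$ cannot be internalized by any system whose proof-theoretic ordinal lies strictly below it, which is precisely what rules out a uniform enumeration within any single $\mathcal{F}_k$.
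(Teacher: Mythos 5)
Your decomposition is essentially the paper's own: the paper proves this theorem only as a two-sentence sketch (items (1)--(2) ``follow from the countability of each $\mathbb{R}_{S_n}$ and the fact that no finite system $\mathcal{F}_k$ can encode all $\mathbb{R}_{S_m}$ for $m>k$''; item (3) ``by construction''), and your treatment of (1) and (3) is exactly that argument made explicit — counting finite expressions for (1), the least-number principle for (3), plus the correct observation that conservative syntactic extension gives the nesting $\mathbb{R}_{S_n}\subseteq\mathbb{R}_{S_{n+1}}$, which the paper never states but implicitly uses. Where you add genuine value is in flagging the obstacle the paper's sketch silently steps over: ``strictly extends by definitional strength'' must be read as asserting that new \emph{reals} appear at each level, i.e.\ $\mathbb{R}_{S_{k+1}}\setminus\mathbb{R}_{S_k}\neq\emptyset$, and neither increasing proof-theoretic ordinals nor conservativity (which, as you note, holds only for a restricted sentence class and if anything pulls the other way) delivers this for free. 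Your proposed repair via a Hilbert--Bernays satisfaction predicate and reflection up to $\alpha(n)$ is a plausible way to manufacture a level-separating real at each stage, and in that role it is strictly more substantive than anything in the paper, which simply cites Kleene and G\"odel at the analogous point in Theorem~\ref{thm:nonenum}.

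One simplification you should notice, however: once the layer-separation premise $\mathbb{R}_{S_k}\subsetneq\mathbb{R}_{S_{k+1}}$ is granted (as the paper implicitly grants it), the non-enumerability half of (2) needs none of the heavy machinery you invoke. If $e\colon\mathbb{N}\to\mathbb{R}_{S_\omega}$ were total, surjective, and definable in $\mathcal{F}_k$, then each value $e(m)$ would itself be a real definable in $\mathcal{F}_k$ — it is the value of an $\mathcal{F}_k$-definable function at a numeral, with approximations and convergence witnesses inherited from the definition of $e$ — so $\mathbb{R}_{S_\omega}\subseteq\mathbb{R}_{S_k}$, collapsing the strict hierarchy at level $k+1$. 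This one-step collapse argument replaces your appeal to G\"odel's second incompleteness theorem, Kleene's recursion theorem, and internalized satisfaction, all of which are doing work only because your version tries to prove the separation premise and the enumeration contradiction simultaneously. Separating the two concerns — (a) strictness yields new reals at each level, which is where reflection-style arguments genuinely belong, and (b) a definable enumeration would absorb the whole union into one level, which is elementary — would make your proof both cleaner and closer to airtight than the paper's own sketch.
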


\begin{proof}[Sketch]
Items (1) and (2) follow from the countability of each \( \mathbb{R}_{S_n} \) and the fact that no finite system \( \mathcal{F}_k \) can encode all \( \mathbb{R}_{S_m} \) for \( m > k \). Item (3) holds by construction of the hierarchy.
\end{proof}

\begin{remark}
Both \( S^{\mathcal{F}_\omega} \) and \( \mathbb{R}_{S_\omega} \) provide process-relative constructive approximations to the classical continuum. The former arises as a meta-level union of all internally generated definability chains across fixed systems, while the latter reflects a stratified construction along a single ascending hierarchy of formal systems.

This stratified model corresponds to a particular choice of definability chain within the broader class \( \mathbb{F}_\omega \), which contains all admissible sequences of constructive formal systems. For each \( \{ \mathcal{F}_n \} \in \mathbb{F}_\omega \), one obtains a distinct continuum model
\[
\mathbb{R}_{S_\omega}^{\{ \mathcal{F}_n \}} := \bigcup_{n=0}^\infty \mathbb{R}_{S_n},
\]
reflecting the expressive power of that hierarchy. As \( \mathbb{F}_\omega \) contains infinitely many such hierarchies, these continuum models collectively form a fractal cover of the real line through process-relative definability.

We refer to both \( S^{\mathcal{F}_\omega} \) and the various instances of \( \mathbb{R}_{S_\omega}^{\{ \mathcal{F}_n \}} \) as manifestations of the \emph{fractal continuum}, highlighting their origin in different paradigms of constructive extension: internal syntactic expansion versus external definability layering.
\end{remark}

\section{Ultimate Fractal Closure of the Real Line}

We now consider the full limit of stratified constructive approximations to the real continuum. Each admissible hierarchy \( \{ \mathcal{F}_n \} \in \mathbb{F}_\omega \) defines a countable set \( \mathbb{R}_{S_\omega}^{\{ \mathcal{F}_n \}} \) of real numbers that are constructively definable within that particular trajectory of formal growth. These models reflect process-relative views of the continuum.

Taking the union over all such admissible chains yields a constructively grounded class of real numbers that is no longer confined to a single definitional path:

\begin{definition}[Universal Fractal Continuum]
Let \( \mathbb{F}_\omega \) denote the class of all admissible stratified chains \( \{ \mathcal{F}_n \} \) of constructive formal systems. Then the \emph{universal fractal continuum} is defined as:
\[
\mathbb{R}^{\mathbb{F}_\omega} := \bigcup_{\{ \mathcal{F}_n \} \in \mathbb{F}_\omega} \mathbb{R}_{S_\omega}^{\{ \mathcal{F}_n \}} = \bigcup_{\{ \mathcal{F}_n \} \in \mathbb{F}_\omega} \bigcup_{n=0}^\infty \mathbb{R}_{S_n}.
\]
\end{definition}

\begin{theorem}[Fractal Constructive Closure]
The set \( \mathbb{R}^{\mathbb{F}_\omega} \subseteq \mathbb{R} \) has the cardinality of the continuum:
\[
|\mathbb{R}^{\mathbb{F}_\omega}| = \mathfrak{c},
\]
but it is not uniformly enumerable within any constructive system or any single stratified hierarchy.
\end{theorem}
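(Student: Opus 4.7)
The plan is to split the theorem into the cardinality equality and the non-enumerability claim, and to handle each separately. The upper bound $|\mathbb{R}^{\mathbb{F}_\omega}| \leq \mathfrak{c}$ is immediate because $\mathbb{R}^{\mathbb{F}_\omega} \subseteq \mathbb{R}$. For the matching lower bound, I would construct an injection $2^\omega \hookrightarrow \mathbb{R}^{\mathbb{F}_\omega}$. Given $x \in 2^\omega$, build a chain $\{\mathcal{F}_n^x\} \in \mathbb{F}_\omega$ by fixing a base system $\mathcal{F}_0$ (say $\mathsf{RCA}_0$) and, at stage $n$, adjoining a finite definitional expansion that names the prefix $x\upharpoonright n$ as a concrete bit-string constant and also strictly raises the proof-theoretic ordinal (for instance by adding a reflection scheme over $\mathcal{F}_n^x$). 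Each finite stage is a constructive system in the sense of Definition~1.1, since only finite syntactic data is added. The real $r_x := \sum_{n \geq 0} x(n)\,2^{-n-1}$ is then definable in some $\mathcal{F}_n^x$ via its $n$-th rational approximation, hence lies in $\mathbb{R}_{S_\omega}^{\{\mathcal{F}_n^x\}} \subseteq \mathbb{R}^{\mathbb{F}_\omega}$. The map $x \mapsto r_x$ is injective, so $|\mathbb{R}^{\mathbb{F}_\omega}| \geq \mathfrak{c}$.

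For the non-enumerability part, I would run a meta-level adaptation of the diagonal argument already used for Theorem~\ref{thm:nonenum}. Suppose, toward a contradiction, that some constructive system $\mathcal{F}$ (or some fixed hierarchy $\{\mathcal{G}_n\}$) admits a definable total enumeration $e : \mathbb{N} \to \mathbb{R}^{\mathbb{F}_\omega}$ whose image exhausts the universal union. Since the syntax of $\mathcal{F}$ is countable while $|\mathbb{F}_\omega| = \mathfrak{c}$ by the preceding Proposition, at most countably many admissible chains can be internally represented inside $\mathcal{F}$. Picking any $\{\mathcal{H}_n\} \in \mathbb{F}_\omega$ outside that representable fragment, and applying the parameterized construction above along a real $y$ that diagonalizes against the bit sequences enumerated by $e$, yields a real $r_y \in \mathbb{R}_{S_\omega}^{\{\mathcal{H}_n^y\}} \subseteq \mathbb{R}^{\mathbb{F}_\omega}$ not in the image of $e$, contradicting totality. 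The same argument works verbatim against any single stratified hierarchy, since such a hierarchy is itself just one element of $\mathbb{F}_\omega$ and cannot quantify over all the others.

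The main obstacle is the injection step, specifically the requirement that each $\mathcal{F}_{n+1}^x$ be a \emph{proper} conservative extension of $\mathcal{F}_n^x$ with \emph{strictly} greater proof-theoretic strength, as demanded by the definition of $\mathbb{F}_\omega$. Merely adjoining a finite constant naming $x\upharpoonright n$ is conservative but need not increase ordinal strength, so the bit-encoding layer has to be combined with an independent growth of logical resources at each stage; proving that the combined extension remains conservative and that the ordinal sequence is strictly increasing is the technically most delicate point, and will presumably rely on standard results from ordinal proof theory cited in the paper. A secondary but easier concern is verifying that the $r_x$ are genuinely distinct in the closures (not collapsed by some accidental re-definability across chains), which reduces to the observation that $x$ and $x'$ disagreeing at bit $n$ forces $r_x$ and $r_{x'}$ to differ by at least $2^{-n-2}$, independently of which system witnesses them.
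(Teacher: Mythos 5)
There is a genuine gap in your lower-bound construction, and it is not the one you flag. The step that fails is the claim that \( r_x \) is ``definable in some \( \mathcal{F}_n^x \) via its \( n \)-th rational approximation.'' Under the paper's definition of the stratified levels, \( r \in \mathbb{R}_{S_n} \) requires that the \emph{entire} sequence of rational approximations of \( r \), together with convergence witnesses, be definable inside the single system \( \mathcal{F}_n \); and membership in \( \mathbb{R}_{S_\omega}^{\{\mathcal{F}_n^x\}} = \bigcup_n \mathbb{R}_{S_n} \) demands full definability at some \emph{finite} stage, since the union has no limit stage. But your stage-\( n \) system contains only the finite prefix \( x \upharpoonright n \) (plus reflection axioms that do not depend on \( x \)), which pins \( r_x \) down only to within \( 2^{-n} \). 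For any \( x \) not itself definable from the finite data present at some stage, \( r_x \) lies in no \( \mathbb{R}_{S_n} \), so \( x \mapsto r_x \) lands in \( \mathbb{R}^{\mathbb{F}_\omega} \) for only countably many \( x \). The obstacle you do identify (keeping each extension conservative while strictly raising the proof-theoretic ordinal) is real but repairable by standard means; the prefix-encoding failure is structural and cannot be repaired within Definition 1.1, because adjoining all of \( x \) at once as an oracle makes the axiom set non-enumerable whenever \( x \) is non-computable, destroying constructivity. Your diagonalization for the non-enumerability half invokes the same parameterized construction (the real \( r_y \) along \( \{\mathcal{H}_n^y\} \)) and inherits the gap.

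It is worth seeing how this compares with the paper's own argument: the paper gives only a sketch, asserting the lower bound from the remark that ``different chains may define disjoint or incomparable subsets,'' with no construction at all. Your attempt to substantiate precisely that assertion exposes why it cannot be substantiated from the stated definitions. Every stage of every admissible chain is a finite syntactic object, and the paper itself observes (in the proof of the Proposition on \( |\mathbb{F}_\omega| \)) that there are only countably many constructive formal systems. Hence \( \mathbb{R}^{\mathbb{F}_\omega} \) is contained in a countable union, over the countable pool of systems, of countable definable fragments --- a countable set. The continuum-many chains in \( \mathbb{F}_\omega \) are merely continuum-many increasing sequences drawn from that countable pool and contribute no reals beyond it. So no injection of the kind you attempt can exist, and \( |\mathbb{R}^{\mathbb{F}_\omega}| = \mathfrak{c} \) is not provable from the paper's definitions; a correct treatment would have to change the framework (e.g., admit limit-stage or oracle definability), which is a defect of the theorem rather than of your strategy for proving it.
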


\begin{proof}[Sketch of Proof]
Each individual model \( \mathbb{R}_{S_\omega}^{\{ \mathcal{F}_n \}} \) is countable, being a union of countably many definable layers. The class \( \mathbb{F}_\omega \) of admissible chains has cardinality \( \mathfrak{c} \), and different chains may define disjoint or incomparable subsets of \( \mathbb{R} \). Thus, their union \( \mathbb{R}^{\mathbb{F}_\omega} \) also has cardinality \( \mathfrak{c} \).

However, since no single constructive system can access all definability chains simultaneously, and no internal hierarchy can span the full range of definability directions, the union \( \mathbb{R}^{\mathbb{F}_\omega} \) is not uniformly enumerable within any single framework.
\end{proof}

\begin{remark}
The universal fractal continuum \( \mathbb{R}^{\mathbb{F}_\omega} \) represents the outer boundary of constructive definability under stratified growth. It is the broadest countable-by-construction model of the continuum, emerging from the space of definitional trajectories \cite{Semenov2025FractalAxiomatic,Semenov2025FractalCountability} rather than from set-theoretic postulation.

This conception aligns with the hypothesis in the next section: that the continuum is not a static totality, but a dynamic, process-relative construct whose apparent uncountability arises from the continuity of its definitional landscape.
\end{remark}

\section{Fractal Origin of the Continuum}

We propose that the cardinality of the continuum emerges not from the assumption of actual infinite sets, but from the meta-theoretical variety of constructive definability trajectories. Each formal system defines a countable subset of real numbers, but the class of all admissible stratified chains \( \{ \mathcal{F}_n \} \in \mathbb{F}_\omega \) yields a continuous space of definability directions.

Each individual space \( \mathbb{R}_{S_\omega}^{\{\mathcal{F}_n\}} \) is countable, as it is a union of countably many definable layers. Yet for distinct chains \( \{ \mathcal{F}_n \}, \{ \mathcal{G}_n \} \in \mathbb{F}_\omega \), these sets may define different real numbers, leading to a union that exceeds any single countable system in coverage. The uncountability of the full continuum thus emerges from the combinatorial diversity of such definability spaces.

\begin{hypothesis}[Fractal Origin of the Continuum]
The continuum of real numbers arises not as a pre-existing uncountable totality, but from the continuity of definability across all admissible formal growth chains. That is, the class \( \mathbb{R}^{\mathbb{F}_\omega} \), defined by:
\[
\mathbb{R}^{\mathbb{F}_\omega} := \bigcup_{\{ \mathcal{F}_n \} \in \mathbb{F}_\omega} \bigcup_n \mathbb{R}_{S_n},
\]
is of cardinality \( \mathfrak{c} \), yet no single formal system or countable stratified chain can define all its elements:
\[
\forall \mathcal{F}_\omega, \quad \mathbb{R}^{\mathbb{F}_\omega} \not\subseteq \mathbb{R}_{S_\omega}^{\mathcal{F}_\omega}.
\]
\end{hypothesis}

This hypothesis reframes the continuum as a constructively generated totality, not from the powerset of \( \mathbb{N} \), but from the fibration of definability across all formal languages \cite{Semenov2025FractalAnalysis}. It implies that the ungraspability of the continuum stems from the inexhaustible variety of its expressive foundations.

\begin{remark}
The hypothesis highlights a key distinction between classical set-theoretic continuity and process-relative definability. While the classical continuum is ontologically postulated as a completed totality, the fractal continuum \( \mathbb{R}^{\mathbb{F}_\omega} \) is epistemically assembled as a process-relative geometry of definability. Its uncountability is not a primitive assumption, but the emergent effect of stratified constructive reach.
\end{remark}

\section{Comparison with Existing Approaches}

To situate the fractal definability framework within the broader landscape of foundational treatments of the continuum, we compare it to several established paradigms: Bishop-style constructive analysis, recursive analysis, categorical topos semantics, and homotopy type theory (HoTT). The table below highlights key dimensions of each approach:

\begin{center}
\renewcommand{\arraystretch}{1.3}
\begin{adjustbox}{max width=\textwidth}
\begin{tabular}{@{} lcccc @{}}
\toprule
\textbf{Approach} & \textbf{Continuum Coverage} & \textbf{Single Formal System} & \textbf{Definability Hierarchy} & \textbf{Enumerability} \\
\midrule
Bishop–Bridges          & Countable          & Yes                     & No                         & Partial \\
Recursive Analysis      & Countable          & Yes                     & No                         & No \\
Topos Semantics         & Partial            & No (in sheaf sense)     & Partial                    & No \\
Homotopy Type Theory    & Partial            & Yes                     & No                         & Yes (via types) \\
\textbf{This Work}      & \textbf{Continuum} & \textbf{No (by design)} & \textbf{Yes (fractal)}     & \textbf{No, globally} \\
\bottomrule
\end{tabular}
\end{adjustbox}
\end{center}

\noindent
Unlike traditional constructive frameworks that define countable fragments of the continuum within a fixed system, our approach deliberately avoids global unification. Instead, it constructs a \emph{distributed} or \emph{meta-structural} model of the continuum, emerging from a class of syntactic growth trajectories. The result is a countable-by-construction continuum that transcends any fixed formal ontology.

\section{Constructivity Without Choice or Power Sets}

A crucial feature of the proposed framework is that it avoids all non-constructive set-theoretic principles typically invoked to justify the continuum. In particular:

\begin{itemize}
    \item No axiom of choice is required: at no point do we assume the ability to select elements from infinite families or build global selectors.
    
    \item No power set axiom is assumed: the continuum does not arise from \( \mathcal{P}(\mathbb{N}) \), but from the union of definability layers across formal syntactic hierarchies.
    
    \item No actual infinity is postulated: each real number is introduced only through a finite definitional process within some formal system \( \mathcal{F}_n \).
\end{itemize}

The uncountability of the full continuum \( \mathbb{R}^{\mathbb{F}_\omega} \) is thus not an ontological assumption, but an emergent consequence of syntactic proliferation. This provides a new foundation for analysis: one that remains within the bounds of formal, rule-based generation, yet reaches a structure indistinguishable in size from the classical continuum.

\begin{remark}
The traditional continuum hypothesis presumes a preexisting uncountable set. Here, the continuum arises without such commitment — not by assuming a totality, but by traversing all permissible definitional processes.
\end{remark}

\section{Constructive and Classical Real Lines}

The classical real line \( \mathbb{R} \) is defined as a completed uncountable set, often formalized as the power set \( \mathcal{P}(\mathbb{N}) \) or a maximal completion of Cauchy sequences. This definition includes elements that are not explicitly definable in any finite or syntactic sense.

By contrast, the fractal continuum \( \mathbb{R}^{\mathbb{F}_\omega} \) includes only those real numbers that arise from some admissible trajectory of definability. Each such trajectory defines a countable set, yet the class of all such trajectories is uncountable, yielding:
\[
|\mathbb{R}^{\mathbb{F}_\omega}| = \mathfrak{c},\qquad \mathbb{R}^{\mathbb{F}_\omega} \subseteq \mathbb{R}.
\]

\begin{itemize}
    \item All rational and algebraic numbers are included.
    \item All computable and arithmetically definable reals are included.
    \item All constructively provable convergent series (e.g., \( \pi, e \)) appear within some \( \mathcal{F}_n \).
    \item Non-definable or choice-dependent reals (e.g., certain Hamel basis elements) are excluded.
\end{itemize}

Thus, \( \mathbb{R}^{\mathbb{F}_\omega} \) serves as a constructive core of the classical continuum. It is identical to \( \mathbb{R} \) wherever definability permits, and diverges only in those aspects of the classical line that rely on non-constructive axioms.

\begin{remark}
The classical continuum assumes that all subsets of \( \mathbb{N} \) exist. The fractal continuum assumes that all definitional processes over \( \mathbb{N} \) exist. Where classical ontology postulates, the fractal model constructs.
\end{remark}

\section*{Conclusion}

We have constructed a model of the real continuum that arises entirely from layered formal syntax, without invoking the axiom of choice, power sets, or uncountable postulates. The continuum is reframed as the fractal union of definability spaces — each countable, none complete — whose collective span achieves classical cardinality through purely constructive means.

This approach reveals that uncountability can emerge as a meta-structural property of syntactic growth, not as a primitive assumption. It suggests that the boundary between the countable and the uncountable is epistemic, not ontological.

\medskip
\noindent
\textbf{Open directions}:
\begin{itemize}
    \item How does the fractal continuum interact with classical analysis: limits, continuity, measure, and integration?
    \item Can definability degrees be used to define a metric or topology intrinsic to \( \mathbb{R}^{\mathbb{F}_\omega} \)?
    \item What categorical or sheaf-theoretic structures naturally represent the space of definability chains?
    \item Are there intrinsic invariants of definability (analogous to dimension, rank, or entropy) across stratified hierarchies?
    \item Can this framework accommodate or reinterpret phenomena like non-measurable sets, choice functions, or large cardinal analogues?
\end{itemize}


\end{document}